\newtheorem{thm}{Theorem}
\newtheorem{lemma}{Lemma}
\newtheorem{example}{Example}
\newtheorem*{remark}{Remark}
\newcommand{\nc}{\newcommand}
\nc{\Ra}{\Rightarrow}
\nc{\ra}{\rightarrow}
\nc{\ora}{\overrightarrow}
\nc{\ol}{\overline}
\nc{\de}{\delta}
\nc{\De}{\Delta}
\nc{\p}{\partial}
\nc{\ex}{\exists}
\nc{\nex}{\nexists}
\nc{\fa}{\forall}
\nc{\Ea}{\Leftrightarrow}
\nc{\ea}{\leftrightarrow}
\nc{\ub}{\underbrace}
\nc{\Rn}{\mathbb{R}^n}
\nc{\Rne}{\mathbb{R}^{n+1}}
\nc{\Rnm}{\mathbb{R}^{n-1}}
\nc{\bR}{\mathbb{R}}
\nc{\bC}{\mathbb{C}}
\nc{\bN}{\mathbb{N}}
\nc{\bZ}{\mathbb{Z}}
\nc{\bT}{\mathbb{T}}
\nc{\bD}{\mathbb{D}}
\nc{\bfx}{\textbf{\textit{x}}}
\nc{\bfX}{\textit{\textbf{X}}}
\nc{\bfY}{\textbf{\textit{Y}}}
\nc{\bfZ}{\textbf{\textit{Z}}}
\nc{\bfy}{\textbf{\textit{y}}}
\nc{\bfa}{\textbf{\textit{a}}}
\nc{\bfv}{\textbf{\textit{v}}}
\nc{\D}{\Delta}
\nc{\Dn}{\Delta^n}
\nc{\lx}{\lambda(x)}
\nc{\ba}{\textbf{\textit{a}}}
\nc{\Sn}{\S^n}
\nc{\Snm}{\S^{n-1}}
\nc{\Sne}{\S^{n+1}}
\nc{\na}{\nabla}
\nc{\deri}[2]{\frac{d #1}{d #2}}
\nc{\ph}[1]{\phi\left( #1\right)}
\nc{\ft}{_{n=1}^\infty}
\nc{\gs}{\sum_{n=1}^\infty}
\nc{\qf}[2]{\langle #1 #2,#2\rangle}
\nc{\mc}[1]{\mathcal{#1}}
\nc{\conj}[1]{\overline{#1}}
\title{Necessary conditions on the support of RP-measures}
\author[Bergqvist]{Linus Bergqvist}
\address{Department of Mathematics, Stockholm University, 106 91 Stockholm, Sweden.}
\email{linus@math.su.se}
\keywords{RP-measures, Polydiscs, Analytic functions with positive real part}
\subjclass[2010]{28A25, 28A35 (primary); 32A10 (secondary)}
\begin{document}

\begin{abstract}
We give necessary conditions for when a subset of $\bT^n$ can contain the support of some non-zero RP-measure. Among other things we show that the support of a positive RP-measure cannot be contained in reflections of inverse images of half-planes by certain functions in $A(\bD^n)$, sets of linear measure zero, and when $n=2$, graphs of strictly increasing functions. For $n=2$ we also prove that failure to contain the support of any positive RP-measure implies that restrictions of functions in a subspace of $A(\bD^2)$ are uniformly dense among the continuous functions on a related set.
\end{abstract}

\maketitle

\section{Background and preliminaries}
In the theory of analytic functions on the unit disc $\bD$ it is extremely useful that every harmonic function on $\bD$ is the real part of some analytic function, and that every non-negative harmonic function is the Poisson integral of some positive Borel measure on $\bT$ (see for example the Theorem on page 33-34 in \cite{Hoffman}). When analysing analytic functions on the unit polydisc $\bD^n$ however, one faces a new difficulty. Even though the Poisson integral of every positive Borel measure on $\bT^n$ is a non-negative $n$-harmonic function, not every $n$-harmonic function is the real part of some analytic function on $\bD^n$. Here the Poisson integral of $\mu$ is defined as
$$
P[d \mu](z) := \int_{\bT^n} \prod_{j=1}^n \frac{(1-|z_j|^2)}{|\zeta_j - z_j|^2} d \mu(\zeta), \quad z \in \bD^n.
$$

We denote by $M(\mathbb{T}^n)$ the set of complex Borel measures on $\mathbb{T}^n$ of bounded variation, and we denote by $RP(\mathbb{T}^n)$ the set of real measures in $M(\mathbb{T}^n)$ whose Poisson integrals are the real parts of analytic functions on $\mathbb{D}^n$. We will call such measures \emph{RP-measures} since we will use a lot of results from the book \cite{Rudin} in which this terminology is used. Note however that many authors instead call such measures \emph{pluriharmonic measures}. 

In many ways $RP(\bT^n)$ is not well understood for $n>1$. Among other things, it is not invariant under certain operations one would wish it to be: for example if $\mu \in RP(\bT^n)$ and $f \in L^1(d \mu)$, then $f(z) d \mu$ will generally not be an RP-measure, and it's not clear when it will be. Despite some efforts, see for instance \cite{McDonald}, the extreme points of $RP(\bT^n)$ have not been completely characterized. Also, the support of RP-measures is restricted in some ways that are not entirely understood. Recently, some necessary conditions on the support of RP-measures were obtained in \cite{Luger}. Among other things, the authors proved that for $n \geq 2$, if a positive RP-measure $\mu$ has support which doesn't intersect
$$
\bigcup_{j=1}^n \{z \in [0, 2 \pi)^n: \alpha_j < z_j < \beta_j \}
$$
for \emph{any} choice of constants $0 \leq \alpha_j < \beta_j < 2 \pi$, for $j=1, \ldots , n$, then $\mu \equiv 0$ (see Corollary $4.7$ of \cite{Luger}). Here $\bT^n$ has been identified with $[0, 2\pi)^n$.

In this paper we will continue investigating which subsets cannot contain the support of any non-zero RP-measure. We will mainly, but not exclusively, be interested in \emph{positive} RP-measures. The positive RP-measures are precisely the measures corresponding to positive harmonic functions, and such measures appear naturally in many different contexts. For example, Doubtsov recently initiated the study of \emph{Clark measures} on $\bT^n$ in \cite{Doubtsov}, and all such measures are positive RP-measures. For further results on Clark measures, see also \cite{Clark1} and \cite{Clark2}, where among other things properties of the supports of Clark measures on $\bT^2$ corresponding to Rational inner functions were studied.

We begin with some notation and preliminaries. Let $X$ be a subspace of $C(\bT^n)$ equipped with the supremum norm. A measure $\mu \in M(\mathbb{T}^n)$ is said to be an annihilating measure for $X$  if 
$$
\int_{\mathbb{T}^n} f(z) d \mu(z) = 0
$$
for all $f \in X$. The set of all annihilating measures of $X$ will be denoted by $X^\perp$. We will mainly be interested in the cases where $X$ is the polydisc algebra $A(\bD^n)$ (or $A$ for short, when the dimension is contextually clear) or the space of functions of the form $z_1 z_2 \cdots z_n f(z)$ where $f(z)\in A(\bD^n)$. The space of functions of the form $z_1 z_2 \cdots z_n f(z)$ where $f(z) \in A(\bD^n)$ will be denoted by $A_{\vec{0}}(\bD^n)$, or $A_{\vec{0}}$ for short.

If $S$ is a compact subset of $\bT^n$, then we denote by $X|_S$ the subspace of $C(S)$ consisting of all restrictions to $S$ of functions $f \in X$. That is
$$
X|_S = \{f|_S: f \in X \}.
$$

Throughout this paper we will use the facts that every finite Borel measure on $\bT^n$ is a Radon measure, and that if $\mu$ is a Radon measure on a compact subset $S \subset \bT^n$, then the space of continuous functions on $S$ equipped with the supremum norm, which we will denote by $C(S)$, is dense in $L^p(\mu)$ for $1 \leq p < \infty$. See for example Theorem $7.8.$ and Proposition $7.9.$ in \cite{Folland} for more details.

Finally, we will make extensive use of Theorem $6.1.2.$ from \cite{Rudin}. The theorem describes five different properties a compact subset of $\bT^n$ can have, and states that it has one of these properties if and only if it has all of the others. We will in particular use that a compact subset $S \subset \bT^n$ has the property that it is a \emph{null set for the annihilators of $A$}, or more concretely that $|\mu|(S) = 0$ for every complex measure $\mu \in A^\perp$ if and only if it is an \emph{interpolation set} for $A$. That a compact subset $S$ is an interpolation set for $A$ means that for every continuous function $f$ on $S$ there exists a function $g \in A$ such that $g|_S = f$. In particular, we will use that the same equivalence holds if we replace $A$ with $A_{\vec{0}}$. Since this will be used several times, we formulate it as a theorem.

\begin{thm} \label{6.1.2. for A_00}
Let $S$ be a compact subset of $\bT^n$. Then $S$ is an interpolation set for $A$ if and only if it is an interpolation set for $A_{\vec{0}}$, and it is a null set for $A_{\vec{0}}^\perp$ if and only if it is a null set for $A^\perp$. 

Thus $S$ has the property that for every continuous function $f \in C(S)$ there exists a function $g \in A_{\vec{0}}$ such that $g |_S = f$ if and only if $S$ is a null set for $A_{\vec{0}}^\perp$.  
\end{thm}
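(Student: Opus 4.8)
The plan is to exploit the single algebraic fact that $A_{\vec{0}} = (z_1 z_2 \cdots z_n)\,A$, together with the observation that the function $u(z) := z_1 z_2 \cdots z_n$ is \emph{unimodular} on $\bT^n$. Because of this, multiplication by $u$ is a bijective isometry of $C(S)$ for every compact $S \subset \bT^n$, and also of $M(\bT^n)$, and it respects exactly the structures appearing in the statement. The crucial point is that $\bar u = \bar z_1 \cdots \bar z_n$ is a continuous inverse of $u$ \emph{on $\bT^n$}, so none of the arguments below require $\bar u \in A$; they only use that $u$ and $\bar u$ are continuous and that $u\bar u \equiv 1$ on $\bT^n$.

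First I would settle the interpolation sets. By definition $S$ is an interpolation set for $A$ iff $A|_S = C(S)$, and likewise for $A_{\vec{0}}$. Restricting to $S$ gives $A_{\vec{0}}|_S = u|_S \cdot A|_S$ pointwise. Since $u|_S \in C(S)$ is nowhere zero, the map $g \mapsto u|_S \cdot g$ is a bijection of $C(S)$ onto itself with inverse $g \mapsto \bar u|_S \cdot g$. Hence $A|_S = C(S)$ forces $A_{\vec{0}}|_S = u|_S \cdot C(S) = C(S)$, and conversely $A_{\vec{0}}|_S = C(S)$ forces $A|_S = \bar u|_S \cdot A_{\vec{0}}|_S = C(S)$ (using $u\bar u \equiv 1$ on $S$). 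This is the first equivalence.

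Next I would treat the null sets. For $\mu \in M(\bT^n)$ let $u\mu$ denote the measure $E \mapsto \int_E u\, d\mu$, so that $\int f\, d(u\mu) = \int uf\, d\mu$ for all $f \in C(\bT^n)$. Then $\mu \in A_{\vec{0}}^\perp$ iff $\int (uf)\, d\mu = 0$ for all $f \in A$ iff $u\mu \in A^\perp$; thus $\mu \mapsto u\mu$ is a bijection of $A_{\vec{0}}^\perp$ onto $A^\perp$ with inverse $\nu \mapsto \bar u \nu$ (again because $u\bar u \equiv 1$). Since $|u| \equiv 1$ on $\bT^n$, the polar decomposition of complex measures gives $d|u\mu| = |u|\, d|\mu| = d|\mu|$, so $|u\mu|(S) = |\mu|(S)$ for every Borel set $S$. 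Therefore $|\mu|(S) = 0$ for all $\mu \in A_{\vec{0}}^\perp$ iff $|\nu|(S) = 0$ for all $\nu \in A^\perp$, which is the second equivalence.

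Finally the closing assertion is immediate: Rudin's Theorem $6.1.2.$ applied to $X = A$ says $S$ is an interpolation set for $A$ iff it is a null set for $A^\perp$; chaining this with the two equivalences just obtained shows that $S$ is an interpolation set for $A_{\vec{0}}$ — that is, every $f \in C(S)$ extends to some $g \in A_{\vec{0}}$ — if and only if $S$ is a null set for $A_{\vec{0}}^\perp$. I do not expect a real obstacle here; the only step needing any care is the identity $|u\mu| = |\mu|$ for unimodular $u$, which is the standard polar-decomposition computation for measures of bounded variation.
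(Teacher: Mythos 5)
Your proposal is correct and follows essentially the same route as the paper: multiply by the unimodular monomial $z_1\cdots z_n$ to pass between $A$ and $A_{\vec{0}}$ (both for interpolation, via restriction to $S$, and for annihilators, via the pushforward of the measure, using $|z_1\cdots z_n|\equiv 1$ so total variations agree), then invoke Rudin's Theorem $6.1.2$. The only cosmetic difference is that you phrase the interpolation step as a bijection of $C(S)$ rather than noting the trivial inclusion $A_{\vec{0}}\subset A$ for one direction, which changes nothing of substance.
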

\begin{proof}
We first show that $S$ is an interpolation set for $A$ if and only if it is an interpolation set for $A_{\vec{0}}$. Clearly an interpolation set for $A_{\vec{0}}$ is an interpolation set for $A$ since $A_{\vec{0}} \subset A$, and conversely, if $S$ is an interpolation set for $A$, then for every $f \in C(S)$ there is a function $g \in A$ such that $g|_S = \overline{z_1} \cdots \overline{z_n} f(z)$, and thus $z_1 \cdots z_n g(z) \in A_{\vec{0}}$ and $(z_1 \cdots z_n g(z))|_S = f(z)$.

Secondly, $S$ is a null set for all the annihilators of $A$ if and only if it is a null set for all the annihilators of $A_{\vec{0}}$. By using that
$$
\int_{\bT^2} z_1 \cdots z_n f(z) d\mu(z) = \int_{\bT^2} f(z) (z_1 \cdots z_n d\mu(z))
$$
for every $f(z) \in A$, we see that $\mu$ is an annihilating measure for $A_{\vec{0}}$ if and only if $(z_1 \cdots z_n) d \mu(z)$ is an annihilating measure for $A $. Since $|z_1 \cdots z_n| = 1$ on $\bT^n$, these measures have the same null sets, and thus $S$ is a null set for all annihilators of $A$ if and only if it is a null set for all annihilators of $A_{\vec{0}}$. 

The equivalence of the two properties now follow from Theorem $6.1.2.$ in \cite{Rudin}
\end{proof}

One of the five properties from Theorem $6.1.2.$ in \cite{Rudin} is that $S \subset \bT^n$ is a \emph{zero set for $A$}. However, a zero set in that context is a set $S$ on which a function $f \in A$ vanishes, but such that $f$ is non-zero \emph{everywhere} in $\overline{\bD}^n \setminus S$. In the next section we will consider level sets of functions $f \in A_{\vec{0}}$, but we will make no assumptions on $f$ outside of $\bT^n$. 

\section{Necessary conditions on the support of RP-measures}

In this section we establish a correspondence between RP-measures and annihilators of $A_{\vec{0}}$, and then find necessary conditions for possible support sets of the latter class by: $(1)$ using that certain sets corresponding to a function $f \in A_{\vec{0}}$ can't possibly support measures that annihilate $f$, and $(2)$ by applying known results about annihilators of the polydisc algebra from \cite{Rudin}. This will enable us to find necessary conditions on support sets for RP-measures.

The following simple lemma imposes a necessary condition on the support of positive measures in $A_{\vec{0}}^\perp$ and provides the idea for our later results.

\begin{lemma}\label{A_00 support level sets}
The support of a non-zero positive measure $\mu \in A_{\vec{0}}^\perp$ cannot be contained in the level set corresponding to any $\alpha \neq 0$ of any function $f \in A_{\vec{0}}$.
\end{lemma}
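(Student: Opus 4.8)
The plan is to use nothing more than the defining property of $A_{\vec 0}^\perp$ together with the fact that $f$ is, by hypothesis, constant on the putative support of $\mu$. So suppose toward a contradiction that $\mu \in A_{\vec 0}^\perp$ is a non-zero \emph{positive} measure whose support is contained in the level set $L_\alpha := \{z \in \bT^n : f(z) = \alpha\}$ for some $f \in A_{\vec 0}$ and some $\alpha \neq 0$.

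First I would observe that $f$, being the restriction to $\bT^n$ of a function in $A(\bD^n)$, is continuous on $\bT^n$, so $\int_{\bT^n} f \, d\mu$ makes sense; and since $f \in A_{\vec 0}$ while $\mu \in A_{\vec 0}^\perp$, this integral equals $0$. On the other hand, a positive Borel measure is concentrated on its (closed) support, so $\int_{\bT^n} f\, d\mu = \int_{L_\alpha} f \, d\mu = \alpha \, \mu(\bT^n)$. Because $\mu$ is a non-zero positive measure, $\mu(\bT^n) = \|\mu\| > 0$, hence $\alpha \, \mu(\bT^n) \neq 0$, contradicting the previous sentence. Therefore no such $\mu$ exists.

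One small point I would make explicit is where positivity enters: for a general real (signed) measure the same computation would only yield $\mu(\bT^n) = 0$, which is not a contradiction, so the positivity of $\mu$ is genuinely needed to conclude $\mu(\bT^n) \neq 0$. I would also flag that ``level set'' here must be read inside $\bT^n$ (i.e.\ $\{z \in \bT^n : f(z) = \alpha\}$), consistent with the fact that supports of measures in $A_{\vec 0}^\perp$ live on $\bT^n$; if one prefers, the argument works verbatim with $L_\alpha$ replaced by any Borel set $E$ with $\mathrm{supp}\,\mu \subseteq E \subseteq \{f = \alpha\}$.

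Honestly there is no serious obstacle in this lemma — it is the ``warm-up'' that isolates the mechanism (an annihilating measure cannot see a function that is a nonzero constant on its support). The only thing to be careful about is the bookkeeping just described; the substance of the section will be in replacing a single level set by the richer families of sets (inverse images of half-planes, graphs, etc.) for which a comparable obstruction can be engineered, typically by combining this observation with the interpolation/null-set dichotomy recorded in Theorem~\ref{6.1.2. for A_00}.
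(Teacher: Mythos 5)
Your proof is correct and is essentially identical to the paper's: both integrate $f$ against $\mu$, obtaining $0$ from $\mu \in A_{\vec{0}}^\perp$ and $\alpha\,\mu(\bT^n) \neq 0$ from the support hypothesis and positivity. Your added remarks on where positivity enters are accurate but not needed beyond the paper's one-line argument.
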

\begin{proof}
If $\mu \in A_{\vec{0}}^\perp$ is a non-zero positive measure and $\text{supp}(\mu) \subset \{z \in \bT^n: f(z) = \alpha \}$ then 
$$
0 = \int_{\bT^n} f(z) d\mu(z) = \int_{\{z \in \bT^n: f(z) = \alpha \} } f(z) d \mu(z) = \alpha \mu(\bT^n) \neq 0,
$$
which contradicts the assumption on $\mu$.
\end{proof}

We will show that by a simple transformation we can send any given RP-measure to a real measure in $A_{\vec{0}}^\perp$ in a way that preserves positivity and whose effect on the support is easy to understand. Thus the above lemma can be used to obtain necessary conditions on the support of positive RP-measures.

By Theorem $2.4.1.$ in \cite{Rudin}, a real measure $\mu$ is an RP-measure if and only if all of its mixed Fourier coefficients vanish. That is, $\mu$ is an RP-measure if and only if
$$
\int_{\bT^n} z_1^{k_1} \cdots z_n^{k_n} d\mu(z) = 0
$$
unless all $k_j \in \bZ$ satisfy $k_j \geq 0$ for $j=1, \ldots, n$, or $k_j \leq 0$ for $j=1, \ldots n$.

Note that for real measure $\mu$
$$
\int_{\bT^n} z_1^{k_1} \cdots z_n^{k_n} d\mu(z) = 0 \iff \int_{\bT^n} z_1^{-k_1} \cdots z_n^{-k_n} d\mu(z) = 0.
$$
In particular a real measure $\mu \in M(\bT^2)$ is an RP-measure if and only if
$$
\int_{\bT^2} z_1^n \overline{z_2}^m d \mu(z_1, z_2)  = 0
$$
for all $m,n \geq 1$. 

By making the change of variables $T: (z_1, \ldots , z_n) \mapsto (z_1, \ldots ,z_{n-1}, \overline{z_n})$ we see that if $\mu$ is an RP-measure, then the pushforward measure $T_*(\mu)$, which is defined by
$$
T_*(\mu)(S) := \mu(T^{-1}(S))
$$
for every Borel set $S$, has the property that
$$
\int_{\bT^n} z_1^{k_1} \cdots z_n^{k_n} dT_*(\mu)(z) = \int_{\bT^n} z_1^{k_1} \cdots z_{n-1}^{k_{n-1}} \overline{z_n}^{k_n} d\mu(z) = 0
$$
for all $k_j \geq 1, j=1 \ldots n$.

Since $T$ is a bijection on $\bT^n$ and $T^2 = \text{Id}$ (so $T = T^{-1}$), this implies that if $\mu$ is an RP-measure then $T_*(\mu)$ is a real measure that annihilates all monomials $z_1^{k_1} \cdots z_n^{k_n}$ where $k_j \geq 1$, for $j= 1, \ldots n$. By continuity of $T_ *(\mu)$ as a linear functional on $C(\bT^n)$ this means that $T_*(\mu)$ annihilates all $f \in C(\bT^n)$ for which $\hat{f}(k_1, \ldots , k_n) = 0$ whenever $k_j < 1$ for \emph{some} $j=1 \ldots, n$. This is precisely the space of all functions of the form $z_1 \cdots z_n f(z)$ where $f \in A(\mathbb{D}^n)$, i.e. $A_{\vec{0}}$.

\begin{remark}
\normalfont
If $n=2$ the above implications are in fact equivalences. So in this case $\mu$ is an RP-measure if and only $T_*(\mu)$ is a real measure in $A_{\vec{0}}^\perp$.
\end{remark}

This means that if no real measure in $A_{\vec{0}}^\perp$ has support contained in $S \subset \mathbb{T}^n$ then no RP-measure has support contained in $T^{-1}(S)$. If $\mu$ is a positive RP-measure, then $T_*(\mu)$ is also positive, and thus the corresponding statement also holds if we restrict our attention to positive measures. If $S$ is a null set for all real measures in $A_{\vec{0}}^\perp$, then $T^{-1}(S)$ is a null set for all RP-measures. Finally, note that if $n=2$, then all of these implications become equivalences. We formulate this as a lemma.

\begin{lemma} \label{Transform support}

If a compact subset $S \subset \bT^n$ has the property that no non-zero real (positive) measure in $A_{\vec{0}}^\perp$ has support contained in $S$, then no non-zero (positive) RP-measures has support contained in $T^{-1}(S)$. 

Furthermore, if a compact subset $S \subset \bT^n$ has the property that $\mu(S) = 0$ for all real (positive) $\mu \in A_{\vec{0}}(\bD^n)^\perp$, then $\nu(T^{-1}(S))=0$ for all (positive) $\nu \in RP(\bT^n)$. 

If $n=2$ the above implications are equivalences. 

\end{lemma}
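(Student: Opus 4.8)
The plan is to read the lemma off from the pushforward construction carried out in the discussion preceding it. All of the analytic input is already in place — the characterisation of RP-measures by the vanishing of mixed Fourier coefficients (Theorem $2.4.1$ of \cite{Rudin}), and, for $n=2$, the converse noted in the Remark — so what remains is purely a matter of tracking how the involution $T$ and its pushforward $T_*$ act on supports, null sets, positivity, and non-vanishing.

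First I would record the relevant properties of the map $T_* : M(\bT^n) \to M(\bT^n)$, $T_*\nu(E) = \nu(T^{-1}(E))$. Since $T$ is a homeomorphism of $\bT^n$ with $T = T^{-1}$: the map $T_*$ is linear and equal to its own inverse, hence injective, so it sends non-zero measures to non-zero measures; it obviously sends real measures to real measures and positive measures to positive measures; from $|T_*\nu| = T_*|\nu|$ and the fact that $T$ takes a neighbourhood basis of a point $x$ to one of $T^{-1}(x)$ one gets $\text{supp}(T_*\nu) = T(\text{supp}(\nu))$, and since $T(T^{-1}(S)) = S$ this yields the equivalence $\text{supp}(\nu) \subset T^{-1}(S) \iff \text{supp}(T_*\nu) \subset S$; and $\nu(T^{-1}(S)) = T_*\nu(S)$ holds by the very definition of the pushforward. (Note also that $T^{-1}(S)$ is compact whenever $S$ is.)

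Granting these, the first assertion follows by contraposition: a non-zero real (resp. positive) RP-measure $\nu$ with support in $T^{-1}(S)$ would, by the discussion above, have $T_*\nu$ a non-zero real (resp. positive) measure in $A_{\vec{0}}^\perp$ with support in $S$, contradicting the hypothesis on $S$. The null-set assertion follows similarly: for any real (resp. positive) $\nu \in RP(\bT^n)$ the measure $T_*\nu$ is real (resp. positive) and lies in $A_{\vec{0}}^\perp$, so $T_*\nu(S) = 0$ by hypothesis, and $T_*\nu(S) = \nu(T^{-1}(S))$. For $n = 2$, by the Remark the correspondence $\mu \mapsto T_*\mu$ is a bijection between the real measures in $A_{\vec{0}}^\perp$ and the RP-measures, preserving positivity and non-vanishing; hence both of the above arguments run in the opposite direction, with the roles of $\mu$ and $T_*\mu$ (and of $S$ and $T^{-1}(S)$) interchanged, giving the converse implications.

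I do not expect a genuine obstacle: the substance of the statement is already contained in the preceding discussion, and what is left amounts to the bookkeeping just outlined. The only mildly technical point is the identity $\text{supp}(T_*\nu) = T(\text{supp}(\nu))$, and even that is routine once one invokes $|T_*\nu| = T_*|\nu|$ together with the fact that $T$, being a homeomorphism, matches up neighbourhood bases of $x$ and $T^{-1}(x)$.
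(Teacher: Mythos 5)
Your proposal is correct and follows essentially the same route as the paper, which gives no separate proof but simply formalises the preceding pushforward discussion into the lemma; your bookkeeping of how $T_*$ interacts with positivity, supports, and null sets, together with the appeal to the $n=2$ Remark for the converse, is exactly what the paper leaves implicit.
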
 

For $n=2$ one can thus completely understand the supports and null sets of RP-measures by understanding the corresponding sets for real measures in $A_{\vec{0}}(\bD^2)$. For larger $n$, it is a lot more difficult for a measure $\mu$ to be an RP-measure than it is for $T_*(\mu)$ to annihilate $A_{\vec{0}}$. Nevertheless, Lemma \ref{Transform support} can still be used to find \emph{necessary} conditions.

If $\mu$ is a \emph{positive} RP-measure, then by Lemma \ref{A_00 support level sets} the support of $T_*(\mu)$ can't be contained in any set of the form
$$
\{z \in \bT^n: f(z) = \alpha \},
$$
for any $f \in A_{\vec{0}}$ and  $\alpha \neq 0$, and so by Lemma \ref{Transform support} the support of $\mu$ cannot be contained in the image of such a set under $T^{-1}$.

However, by noting that the only property a set $S$ containing the support of $\mu$ needs to have for the proof of Lemma \ref{A_00 support level sets} to work is that the range of some suitable $f \in A_{\vec{0}}$ on this set is such that the integral 
$$
\int_{S} f(z) d \mu(z)
$$
cannot vanish, we can extend the above result to a much larger class of sets. As will soon be shown, every set that is mapped by $f$ into some closed half-plane
$$
H_\epsilon^\theta := \{ e^{i \theta} z \subset \bC: \text{Re}(z) \geq \epsilon  \},
$$
will have this property whenever $\epsilon > 0$ and $\theta \in [0, 2 \pi)$. This is essentially a consequence of the fact that the range of $f$ is then, by assumption, a subset of a convex set that does not contain the origin.

Note that $H_\epsilon^\theta$ is the \emph{maximal} closed convex subset of $\bC$ with the property that $\epsilon e^{i \theta}$ is the unique element in the subset which is closest to the origin: if we add any additional element outside $H_\epsilon^\theta$ to the set, then convexity will imply that the line connecting $\epsilon e^{i \theta}$ to this new point also lies in the set, and this line will contain points of smaller distance to the origin.

The following elementary lemma illustrates the above point, and will be used to give necessary conditions on the supports of RP-measures.

\begin{lemma} \label{integral convex set can't vanish}
Let $f \in C(\bT^n)$ and let $\mu$ be a non-zero positive measure in $M(\bT^n)$ such that $\text{supp}(\mu) \subset f^{-1}(H_\epsilon^\theta)$. Then 
\begin{equation} \label{cant annihilate}
\int_{\bT^n} f(z) d \mu(z) \neq 0.
\end{equation}

\end{lemma}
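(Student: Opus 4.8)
The plan is to exploit the fact that $H_\epsilon^\theta$ is a closed convex set whose unique point of minimal modulus is $\epsilon e^{i\theta}$, and that this minimality is witnessed by a single real-linear functional. Concretely, for any $w = e^{i\theta} z \in H_\epsilon^\theta$ we have $\mathrm{Re}(z) \geq \epsilon$, which is exactly the statement that $\mathrm{Re}(e^{-i\theta} w) \geq \epsilon$. So the first step is to rotate: replacing $f$ by $g := e^{-i\theta} f$, the hypothesis $\mathrm{supp}(\mu) \subset f^{-1}(H_\epsilon^\theta)$ becomes $\mathrm{Re}(g(z)) \geq \epsilon > 0$ for all $z \in \mathrm{supp}(\mu)$, and $\int f\, d\mu = 0$ would force $\int g\, d\mu = 0$ as well. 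Thus it suffices to derive a contradiction from $\int_{\bT^n} g(z)\, d\mu(z) = 0$.

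The second step is the short computation: since $\mu$ is a positive measure supported where $\mathrm{Re}(g) \geq \epsilon$, we get
\[
0 = \mathrm{Re}\!\left( \int_{\bT^n} g(z)\, d\mu(z) \right) = \int_{\bT^n} \mathrm{Re}(g(z))\, d\mu(z) = \int_{\mathrm{supp}(\mu)} \mathrm{Re}(g(z))\, d\mu(z) \geq \epsilon\, \mu(\bT^n).
\]
Here I have used that $\mathrm{Re}$ commutes with integration against a real (in fact positive) measure, and that the integrand is at least $\epsilon$ on the support. Since $\epsilon > 0$ and $\mu$ is non-zero and positive, $\mu(\bT^n) > 0$, so the right-hand side is strictly positive, contradicting the left-hand side being $0$. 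Hence $\int f\, d\mu \neq 0$.

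There is essentially no serious obstacle here; the only points requiring a word of care are purely measure-theoretic bookkeeping: that $f \in C(\bT^n)$ and $\mu$ of bounded variation make $\int f\, d\mu$ well-defined, that one may restrict the domain of integration to $\mathrm{supp}(\mu)$ without changing the value, and that for a positive measure $\mu(\bT^n) = 0$ would force $\mu \equiv 0$. One might also remark that the geometric discussion preceding the lemma (that $H_\epsilon^\theta$ is the maximal closed convex set with $\epsilon e^{i\theta}$ as unique nearest point to the origin) is what makes this the sharpest statement of this type, but the proof itself needs only the one-sided estimate $\mathrm{Re}(e^{-i\theta} w) \geq \epsilon$ on $H_\epsilon^\theta$. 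I would write the argument in the three compressed steps above: rotate, take real parts, estimate.
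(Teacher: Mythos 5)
Your proof is correct and follows essentially the same route as the paper's: reduce to $\theta = 0$ by the rotation $g = e^{-i\theta}f$, then take real parts and use $\mathrm{Re}(g) \geq \epsilon$ on the support of the positive measure to get the lower bound $\epsilon\,\mu(\bT^n) > 0$. The only cosmetic difference is that you phrase it as a contradiction while the paper directly shows the real part of the integral is strictly positive.
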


\begin{proof}
By setting $g(z) = f(z) e^{-i \theta}$ if necessary, we can without loss of generality assume that $\theta = 0$. 

We have that
\begin{multline*}
\text{Re} \left( \int_{\bT^n} f(z) d \mu(z) \right) = \text{Re} \left( \int_{f^{-1}\left( H_\epsilon^0 \right)} f(z) d \mu(z) \right) \\
= \int_{f^{-1} \left( H_\epsilon^0 \right)} \text{Re} (f(z)) d \mu(z) \geq \epsilon \mu(\bT^n) > 0, 
\end{multline*}
and thus the integral on the left of \eqref{cant annihilate} is not $0$.
\end{proof}

We are now ready to prove the main lemma of this section.

\begin{lemma} \label{convex support annihilate A_00}
No non-zero positive measure in $A_{\vec{0}}^\perp$ can have support contained in $f^{-1}(H_\epsilon^\theta)$ for any choice of $f \in A_{\vec{0}}$, constant $\epsilon > 0$, and angle $\theta \in [0, 2 \pi)$. 
\end{lemma}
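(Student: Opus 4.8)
The plan is to derive the statement as an immediate corollary of Lemma \ref{integral convex set can't vanish}, so the proof is a one-line contradiction argument. Suppose, for contradiction, that there exist $f \in A_{\vec{0}}$, $\epsilon > 0$, $\theta \in [0, 2\pi)$, and a non-zero positive measure $\mu \in A_{\vec{0}}^\perp$ with $\text{supp}(\mu) \subset f^{-1}(H_\epsilon^\theta)$. The key observation is simply that $A_{\vec{0}} \subset C(\bT^n)$, so $f$ is a continuous function on $\bT^n$ and the hypotheses of Lemma \ref{integral convex set can't vanish} are met verbatim.

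On the one hand, since $\mu$ annihilates $A_{\vec{0}}$ and $f \in A_{\vec{0}}$, we have $\int_{\bT^n} f(z)\, d\mu(z) = 0$. On the other hand, Lemma \ref{integral convex set can't vanish} applied to this $f$ and $\mu$ gives $\int_{\bT^n} f(z)\, d\mu(z) \neq 0$. These two conclusions are incompatible, so no such $\mu$ can exist.

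There is no real obstacle here: all the analytic content was already extracted in the proof of Lemma \ref{integral convex set can't vanish}, which used that $H_\epsilon^\theta$ is a closed convex set whose points all have real part (after rotation by $e^{-i\theta}$) bounded below by $\epsilon > 0$. The only thing to be careful about is to state explicitly that membership of $f$ in $A_{\vec{0}}$ is used in two distinct ways — to get $\int f\, d\mu = 0$ from the annihilation property, and (via $A_{\vec{0}} \subset C(\bT^n)$) to be able to invoke the previous lemma — and that positivity of $\mu$ is essential for the lower bound $\text{Re}\left(\int_{\bT^n} f\, d\mu\right) \geq \epsilon\, \mu(\bT^n)$ to hold. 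With $\mu \neq 0$ positive we have $\mu(\bT^n) > 0$, which closes the argument.

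\begin{proof}
Suppose for contradiction that there exist $f \in A_{\vec{0}}$, a constant $\epsilon > 0$, an angle $\theta \in [0, 2\pi)$, and a non-zero positive measure $\mu \in A_{\vec{0}}^\perp$ with $\text{supp}(\mu) \subset f^{-1}(H_\epsilon^\theta)$. Since $\mu$ annihilates $A_{\vec{0}}$ and $f \in A_{\vec{0}}$, we have
$$
\int_{\bT^n} f(z)\, d\mu(z) = 0.
$$
On the other hand, $A_{\vec{0}} \subset C(\bT^n)$, so $f \in C(\bT^n)$, and $\mu$ is a non-zero positive measure in $M(\bT^n)$ with $\text{supp}(\mu) \subset f^{-1}(H_\epsilon^\theta)$. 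Hence Lemma \ref{integral convex set can't vanish} applies and gives
$$
\int_{\bT^n} f(z)\, d\mu(z) \neq 0,
$$
which contradicts the previous identity. Therefore no such $\mu$ exists.
\end{proof}
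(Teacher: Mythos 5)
Your proof is correct and is essentially identical to the paper's own argument: both derive the lemma as an immediate contradiction by applying Lemma \ref{integral convex set can't vanish} to $f$ and $\mu$ and comparing with the vanishing of $\int f\, d\mu$ forced by $\mu \in A_{\vec{0}}^\perp$. Your additional remarks on where positivity and continuity are used are accurate but not needed.
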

\begin{proof}
Suppose on the contrary that there is a non-zero positive measure $\mu \in M(\bT^n)$ which annihilates all functions in $A_{\vec{0}}$, but $\text{supp}(\mu) \subset f^{-1}(H_\epsilon^\theta)$ for some choice of $f \in A_{\vec{0}}$, constant $\epsilon > 0$, and angle $\theta \in [0, 2 \pi)$. Then by Lemma \ref{integral convex set can't vanish} 
$$
\int_{\bT^2} f(z) d \mu(z) \neq 0,
$$
which contradicts the assumption that $\mu$ annihilates $A_{\vec{0}}$. \qedhere

\end{proof}

By combining Lemma \ref{convex support annihilate A_00} and Lemma \ref{Transform support} we obtain the following theorem.

\begin{thm} \label{support of RP}
No positive RP-measure has support contained in
$$
T^{-1}( \{z: f(z) \in H_\epsilon^\theta \} ) = \{(z_1, \ldots, z_{n-1}, \overline{z_n}): f(z_1, \ldots , z_n) \in H_\epsilon^\theta \}
$$
for any choice of $f \in A_{\vec{0}}$, constant $\epsilon > 0$, and angle $\theta \in [0, 2 \pi)$. 
\end{thm}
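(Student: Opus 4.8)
The plan is to deduce the theorem formally from Lemma \ref{convex support annihilate A_00} and Lemma \ref{Transform support}, essentially as announced. First I would fix $f \in A_{\vec{0}}$, a constant $\epsilon > 0$, and an angle $\theta \in [0, 2\pi)$, and set $S := f^{-1}(H_\epsilon^\theta) = \{z \in \bT^n : f(z) \in H_\epsilon^\theta\}$. The one small point to verify is that $S$ is compact, so that Lemma \ref{Transform support} is applicable: since $f \in A_{\vec{0}} \subset A(\bD^n)$ is continuous on $\bT^n$ and $H_\epsilon^\theta$ is closed in $\bC$, the set $S$ is closed in the compact space $\bT^n$, hence compact.

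Next I would invoke Lemma \ref{convex support annihilate A_00}, which says precisely that no non-zero positive measure in $A_{\vec{0}}^\perp$ has support contained in $S = f^{-1}(H_\epsilon^\theta)$. Thus $S$ satisfies the hypothesis of the positive case of Lemma \ref{Transform support}, and applying that lemma yields that no non-zero positive RP-measure has support contained in $T^{-1}(S)$. It then remains only to identify $T^{-1}(S)$ explicitly: since $T(z_1, \ldots, z_n) = (z_1, \ldots, z_{n-1}, \overline{z_n})$ and $T^2 = \mathrm{Id}$, we have $T^{-1}(S) = T(S) = \{(z_1, \ldots, z_{n-1}, \overline{z_n}) : f(z_1, \ldots, z_n) \in H_\epsilon^\theta\}$, which is exactly the set appearing in the statement.

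There is no serious obstacle here — the theorem is a direct combination of the two preceding lemmas. The only things warranting care are the compactness of $S$ noted above, and the observation that although for $n > 2$ Lemma \ref{Transform support} provides only an implication rather than an equivalence, that implication runs in precisely the direction needed here (from a ``no such support'' statement for real positive measures in $A_{\vec{0}}^\perp$ to the corresponding statement for RP-measures). One might also remark, to match the phrasing of the earlier lemmas, that ``positive RP-measure'' in the statement should be read as ``non-zero positive RP-measure'', since the zero measure has empty support and is trivially contained in every set.
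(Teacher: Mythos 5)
Your proposal is correct and follows exactly the route the paper intends: the theorem is stated there as an immediate combination of Lemma \ref{convex support annihilate A_00} and Lemma \ref{Transform support}, and your argument supplies precisely the (routine) details --- compactness of $f^{-1}(H_\epsilon^\theta)$, the direction of the implication in Lemma \ref{Transform support}, and the identification $T^{-1}=T$. Nothing is missing.
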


Note that the above theorem also prohibits supports contained in $T^{-1}(S)$ where $S$ is the level set of a function $f \in A_{\vec{0}}$ corresponding to any $\alpha \neq 0$. 

When dealing with concrete subsets of $\bT^n$ it is often more convenient to describe these sets in terms of the covering map $\Phi: \bR^n \mapsto \bT^n$ given by $\Phi(x_1, \ldots, x_n) = (e^{ix_1}, \ldots,  e^{ix_n})$. For example when working with subsets of the form $\Phi(f(t), g(t))$ with $t \in \bR$, in which case we can employ terminology like positive and negative slope and so on.  

\begin{example}
\normalfont
Let $n=2$. Every function of the form $f_{n,m}(z) = z_1^n z_2^m$, $n,m \geq 1$ lies in $A_{\vec{0}}$, and $f_{n,m}: \bT^2 \mapsto \bT$. By choosing $\epsilon$ small enough, every set of the form $\{ e^{i \omega}: \omega \in [\theta-w, \theta+w]\} \subset \bC$ is contained in some $H_\epsilon^\theta$ whenever $0 \leq w < \pi/2$. Thus no non-zero positive RP-measure has support contained in the set
$$
S_{m,n} := \left\{\Phi(v_1, - v_2): e^{i(n v_1 + m v_2)} = e^{i\omega}, \omega \in [\theta-w, \theta+w]\right\} \subset \bT^2
$$ for any choice of $0 \leq w < \pi/2$, $\theta \in [0, 2 \pi)$, and $n,m \geq 1$.

In particular, by setting $m=1$, we see that no positive RP-measure has support contained in the set
$$
\left\{\Phi(v_1, n v_1 - \omega): v_1 \in [0, 2 \pi), \omega \in [\theta-w, \theta+w]\right\}
$$
for any $0 \leq w < \pi/2$, $\theta \in [0, 2 \pi)$, and $n \geq 1$. 

\end{example}

For $n=2$, the above example already excludes the possibility of positive RP-measures being supported on plenty of curves of strictly positive slope. In fact, by using the results of Chapter $6$ in \cite{Rudin}, we can show that no RP-measure can be supported on a curve of strictly positive slope. To show this, we will need the following lemma, which is obtained by combining Theorem $6.3.5.$ and Theorem $6.1.2.$ in \cite{Rudin}.

\begin{lemma} \label{curves negative slope}
Let $F \subset \bR^2$ be the graph of a strictly decreasing function $\psi$ with domain $\bR$ and let $K \subset \bT^2$ be a compact subset of $\Phi(F)$. Then $K$ is an interpolation set for $A(\bD^2)$.

\end{lemma}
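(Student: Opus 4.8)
The plan is to reduce to a null-set statement and then invoke the relevant result from \cite{Rudin}. By Theorem $6.1.2$ of \cite{Rudin} --- the equivalence already used in this paper and recalled around Theorem \ref{6.1.2. for A_00} --- a compact subset of $\bT^2$ is an interpolation set for $A(\bD^2)$ if and only if it is a null set for $A(\bD^2)^\perp$, that is, $|\mu|(K)=0$ for every $\mu\in A(\bD^2)^\perp$. So it suffices to establish the latter. This property is hereditary and stable under countable unions: if $|\mu|$ vanishes on a Borel set it vanishes on every Borel subset of it, and $|\mu|\bigl(\bigcup_n E_n\bigr)\le\sum_n|\mu|(E_n)$. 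Writing $\bR=\bigcup_{n\in\bZ}[2\pi n,2\pi(n+1)]$ and letting $F_n\subset\bR^2$ be the graph of $\psi$ restricted to $[2\pi n,2\pi(n+1)]$ --- still the graph of a strictly decreasing function, now on a compact interval --- we get $K=\bigcup_{n\in\bZ}\bigl(K\cap\Phi(F_n)\bigr)$, a countable union of compact sets, each contained in the image under $\Phi$ of the graph of a strictly decreasing function on a compact interval. Hence it is enough to show that every such $\Phi(F_n)$ is a null set for $A(\bD^2)^\perp$.

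This is precisely what Theorem $6.3.5$ of \cite{Rudin} supplies: the image under $\Phi$ of the graph of a strictly monotone function (of the admissible orientation) is a null set for the annihilators of $A(\bD^2)$. The point is that one must be on the \emph{decreasing} side of the monotonicity dichotomy, i.e.\ the curve must have ``negative slope'': for $F_n$ the parametrization $t\mapsto(t,\psi(t))$ has $t\mapsto t-\psi(t)$ strictly increasing, so the tangent direction never aligns with the quadrant of exponents $\{(j,k):j\ge0,\ k\ge0\}$ governing $A(\bD^2)$, and this transversality is exactly what forces any annihilating measure to assign $F_n$ measure zero. (For strictly \emph{increasing} $\psi$ the conclusion genuinely fails, since by the Example above such curves can even carry positive RP-measures.) Combining this with the reduction of the previous paragraph, and using Theorem $6.1.2$ once more to pass from the null-set property back to interpolation, yields the lemma.

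The only step that needs real attention is matching the hypotheses of Theorem $6.3.5$ to the present setting: confirming that ``strictly decreasing $\psi$'' is the admissible orientation, and --- depending on the exact form in which $6.3.5$ is stated (a single arc versus a graph over all of $\bR$, and whether it already allows arbitrary compact subsets) --- carrying out the countable-subadditivity bookkeeping of the first paragraph to descend from a general compact $K\subset\Phi(F)$ to the arcs $\Phi(F_n)$. The substantive analytic content lives inside Rudin's Theorem $6.3.5$, whose proof uses that a measure annihilating $A(\bD^2)$ has Fourier coefficients $\widehat\mu(j,k)$ vanishing whenever $j\le0$ and $k\le0$; we use it as a black box rather than reproving it.
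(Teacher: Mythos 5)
Your proposal is correct and takes essentially the same route as the paper, which derives the lemma simply by combining Rudin's Theorem $6.3.5$ (the null-set/interpolation property for graphs of strictly decreasing functions) with Theorem $6.1.2$ and gives no further argument. Your additional countable-subadditivity bookkeeping to localize to compact parameter intervals is a harmless (and, depending on the exact formulation of $6.3.5$, prudent) elaboration of the same idea.
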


Note that $\psi$ is not even assumed to be continuous.

\begin{thm} \label{No RP positive slope}
If $F$ is the graph of a strictly increasing function $\psi$, then every compact subset $K \subset \Phi(F)$ is a null set for $RP(\bT^2)$.

In particular no non-zero RP-measure has support contained in $\Phi(F) \subset \bT^2$ where $F$ is the graph of a strictly increasing function $\psi$. 
\end{thm}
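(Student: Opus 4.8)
The strategy is to push the problem through the involution $T(z_1,z_2)=(z_1,\overline{z_2})$, which turns a curve of strictly positive slope into one of strictly negative slope, and then to invoke the interpolation result of Lemma~\ref{curves negative slope}. Since $n=2$, Lemma~\ref{Transform support} is an equivalence, and because $T=T^{-1}$ we have $K=T^{-1}(T(K))$; hence it suffices to show that $T(K)$ is a null set for the real measures in $A_{\vec 0}(\bD^2)^\perp$. For the support statement I will want the version phrased with total variation, but this is automatic: $T$ is a homeomorphism of $\bT^2$, so $|\nu|(K)=|T_*\nu|(T(K))$, and $T_*\nu$ is a real measure in $A_{\vec 0}^\perp$ whenever $\nu\in RP(\bT^2)$, as established just before Lemma~\ref{Transform support}.

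Next I would compute $T(K)$. After possibly replacing $\psi$ by a strictly increasing extension to all of $\bR$ — which only enlarges $\Phi(F)$ and hence is harmless — I may assume $\psi$ has domain $\bR$, so that $\Phi(F)=\{(e^{ix},e^{i\psi(x)}):x\in\bR\}$ and therefore
$$
T(\Phi(F))=\{(e^{ix},e^{-i\psi(x)}):x\in\bR\}=\Phi(\tilde F),
$$
where $\tilde F$ is the graph of the strictly \emph{decreasing} function $-\psi$, again with domain $\bR$. Thus $T(K)$ is a compact subset of $\Phi(\tilde F)$, so Lemma~\ref{curves negative slope} applies and $T(K)$ is an interpolation set for $A(\bD^2)$. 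By Theorem~\ref{6.1.2. for A_00} this makes $T(K)$ a null set for $A_{\vec 0}(\bD^2)^\perp$, in particular for the real measures in it, and by the previous paragraph $K$ is a null set for $RP(\bT^2)$.

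For the final assertion, suppose $\nu\in RP(\bT^2)$ has $\mathrm{supp}(\nu)\subseteq\Phi(F)$. Then $K:=\mathrm{supp}(\nu)$ is a compact subset of $\Phi(F)$, so $|\nu|(K)=0$ by what was just proved; since $|\nu|$ is concentrated on its support, $|\nu|(\bT^2)=0$, i.e.\ $\nu\equiv 0$. The bulk of the argument is bookkeeping, once the three ingredients (the correspondence $\nu\mapsto T_*\nu$, the explicit action of $T$ on graphs, and Lemma~\ref{curves negative slope}) are assembled; the only point requiring a genuine — though elementary — argument is the extension of $\psi$ in the second paragraph, namely that every strictly increasing function defined on a subset of $\bR$ extends to a strictly increasing function on all of $\bR$, which needs a little care at accumulation points of the domain. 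If one instead reads Theorem~\ref{No RP positive slope} as implicitly assuming $\mathrm{dom}(\psi)=\bR$ (matching the hypothesis of Lemma~\ref{curves negative slope}), that step disappears entirely.
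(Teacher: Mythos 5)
Your argument is correct and essentially identical to the paper's proof: push $K$ through $T$ to turn the strictly increasing graph into a strictly decreasing one, apply Lemma~\ref{curves negative slope} together with Theorem~\ref{6.1.2. for A_00} to conclude that $T(K)$ is a null set for $A_{\vec{0}}^\perp$, and finish with Lemma~\ref{Transform support}. The one caveat is your side claim that every strictly increasing function on a subset of $\bR$ extends to a strictly increasing function on all of $\bR$ --- this can fail when $\psi$ blows up at a finite accumulation point of its domain (e.g.\ $\tan$ on an open interval) --- but since Lemma~\ref{curves negative slope} already assumes domain $\bR$, your fallback reading matches the paper's implicit hypothesis and the proof stands.
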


\begin{proof}

By combining Lemma \ref{curves negative slope} and Theorem \ref{6.1.2. for A_00} we see that $T(K)$ is a null set for $A_{\vec{0}}^\perp$. The theorem now follows from Lemma \ref{Transform support}. 
\end{proof}

That a result corresponding to Lemma \ref{curves negative slope} can't hold in general for functions of \emph{positive} slope is clear, since if this was the case Theorem \ref{No RP positive slope} could be extended to strictly decreasing functions as well. But this is impossible since, for example, Lebesgue measure on
$$
\Phi( \{ (v, -v): v \in [0, 2 \pi)  \}) = \Phi[ T(\{ (v, v): v \in [0, 2 \pi)  \})]
$$
is an RP-measure. In fact, by combining Theorem $6.3.4.$ and $6.1.2.$ of \cite{Rudin} one sees that Lemma \ref{curves negative slope} is false for \emph{every} strictly increasing function $\psi$ which can be extended to a holomorphic function in some neighborhood of the interval of $\bR$ on which it is defined. This covers the example above since $f(v) = v$ is strictly increasing on $\bR$ and can be extended to a holomorphic function.

The results and examples established so far show that it is generally difficult for a "small" set -- like a finite collection of points -- to support any positive RP-measure, since a small set will be contained in $f^{-1}(H_\epsilon^\theta)$ for some choice of $f \in A_{\vec{0}}$, constant $\epsilon > 0$ and angle $\theta \in [0, 2 \pi)$. However, it is for example not yet clear that there are no RP-measures supported on a Cantor like subset of the line $\Phi(\{(v,-v): v \in [0, 2\pi) \})$, since the latter set does support positive RP-measures. Note that this set is not excluded by Corollary $4.7$ in \cite{Luger} either. 

However, by using results from Chapter $6$ in \cite{Rudin} we can prove that no set of linear measure zero -- i.e. of Hausdorff dimension strictly smaller than one -- can support any RP-measure (positive or not). Recall that a set $F \subset \bR^n$ has linear measure zero if to every $\epsilon > 0$ there corresponds an open cover $\{V_j \}$ of $F$ such that the sum of the diameters of the sets $V_j$ is smaller than $\epsilon$.

By combining Theorem $6.1.2.$ in \cite{Rudin} and the corollary on page $149$, we obtain the following lemma.

\begin{lemma} \label{linear measure zero means interpolation}
If $K \subset \mathbb{T}^n$ is a compact set of linear measure zero, then $K$ is an interpolation set for $A$. 
\end{lemma}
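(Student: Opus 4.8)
The plan is to reduce the statement, via the equivalence theorem recalled in the preliminaries, to a purely measure‑theoretic assertion about annihilators of $A$. Recall from Theorem~6.1.2 in \cite{Rudin} (see the discussion preceding Theorem~\ref{6.1.2. for A_00}) that a compact set $S \subset \bT^n$ is an interpolation set for $A$ precisely when it is a null set for $A^\perp$, that is, when $|\mu|(S) = 0$ for every complex measure $\mu \in A^\perp$. Hence it suffices to prove the following: if $K \subset \bT^n$ is compact with linear measure zero, then $|\mu|(K) = 0$ for every $\mu \in A^\perp$.

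This last statement is exactly the content of the corollary on page~149 of \cite{Rudin}, which guarantees that a compact subset of $\bT^n$ of linear measure zero has (one of) the equivalent properties of Theorem~6.1.2 in \cite{Rudin} — namely that it is a null set for the annihilators of the polydisc algebra. I would simply quote that corollary and combine it with the equivalence above to conclude that $K$ is an interpolation set for $A$.

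Thus the only nontrivial input is the fact that no measure orthogonal to $A(\bD^n)$ can charge a set of vanishing $1$-dimensional Hausdorff measure; this, rather than the bookkeeping around Theorem~6.1.2, is where the real work sits, and it is what is being black‑boxed. If one preferred an argument not relying on that corollary, one could instead proceed by projection: each coordinate projection $\pi_j : \bT^n \to \bT$ is Lipschitz, so $\pi_j(K)$ again has linear measure zero, hence Lebesgue measure zero on the circle, hence is an interpolation set for the disc algebra by the Rudin--Carleson theorem; then $K \subset \pi_1(K) \times \cdots \times \pi_n(K)$, a product of one‑variable interpolation sets, which is an interpolation set for $A(\bD^n)$, and a compact subset of an interpolation set is again one by Tietze extension. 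Either way the conclusion follows, but the route through \cite{Rudin} is the shortest.
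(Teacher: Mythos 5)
Your main argument is exactly the paper's proof: the corollary on page $149$ of \cite{Rudin} shows that a compact set of linear measure zero is a null set for the annihilators of $A$, and Theorem $6.1.2.$ there converts this into the interpolation property. The alternative sketch via coordinate projections, the Rudin--Carleson theorem, and products of one-variable peak-interpolation sets is also sound, but the first route is the one the paper takes.
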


By combining this lemma with our previous results we can prove the following.

\begin{thm}
If $K \subset \bT^n$ is a compact subset of linear measure zero, then $K$ is a null set for $RP(\bT^n)$

In particular no non-zero RP-measure on $\bT^n$ has support of linear measure zero, or equivalently the support of a non-zero RP-measure has Hausdorff dimension greater than or equal to one.
\end{thm}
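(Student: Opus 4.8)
The plan is to deduce the statement from Lemma~\ref{linear measure zero means interpolation}, Theorem~\ref{6.1.2. for A_00}, and Lemma~\ref{Transform support}, the one new observation being that the involution $T$ preserves the class of compact sets of linear measure zero. Viewing $\bT^n$ inside $\bC^n=\bR^{2n}$, the map $T\colon(z_1,\dots,z_n)\mapsto(z_1,\dots,z_{n-1},\overline{z_n})$ is complex conjugation in the last coordinate, hence the restriction of a Euclidean reflection; it is therefore a homeomorphism of $\bT^n$ that preserves diameters of subsets. Consequently, if $\{V_j\}$ is an open cover of $K$ with $\sum_j\operatorname{diam}(V_j)<\epsilon$, then $\{T(V_j)\}$ is an open cover of $T(K)$ with the same property, so $T(K)$ is again compact and of linear measure zero.

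Now put $S:=T(K)$. By Lemma~\ref{linear measure zero means interpolation}, $S$ is an interpolation set for $A$, so by Theorem~\ref{6.1.2. for A_00} it is a null set for $A_{\vec{0}}^\perp$; in particular $\lambda(S)=0$ for every real $\lambda\in A_{\vec{0}}^\perp$. The second half of Lemma~\ref{Transform support} then yields $\nu(T^{-1}(S))=0$ for every $\nu\in RP(\bT^n)$, and since $T=T^{-1}$ we have $T^{-1}(S)=T(T(K))=K$. Hence $K$ is a null set for $RP(\bT^n)$; following the total variation through the argument (using $|T_*\mu|=T_*|\mu|$, valid since $T$ is a bijection) even gives $|\nu|(K)=0$ for all $\nu\in RP(\bT^n)$.

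For the final assertion, let $\nu\in RP(\bT^n)$ be non-zero. Its support is a closed, hence compact, subset of $\bT^n$ with $|\nu|(\operatorname{supp}\nu)=\|\nu\|>0$, so by the above $\operatorname{supp}\nu$ cannot have linear measure zero; that is, $\mathcal{H}^1(\operatorname{supp}\nu)>0$, whence $\dim_H(\operatorname{supp}\nu)\geq 1$. Everything here is short given Chapter~6 of \cite{Rudin} together with the earlier lemmas, and I do not expect a genuine obstacle; the only step deserving explicit care is the first---checking that replacing $K$ by $T(K)$ does not destroy the linear-measure-zero hypothesis---which is why I would spell out the diameter-preserving property of $T$ rather than leave it implicit.
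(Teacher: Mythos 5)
Your proposal is correct and follows essentially the same route as the paper: observe that $T(K)$ still has linear measure zero, invoke Lemma~\ref{linear measure zero means interpolation} and Theorem~\ref{6.1.2. for A_00} to see that $T(K)$ is a null set for $A_{\vec{0}}^\perp$, and conclude via Lemma~\ref{Transform support}. The only difference is that you spell out the diameter-preserving property of $T$, which the paper leaves implicit.
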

\begin{proof}
If $K$ has linear measure zero, then so does $T(K)$. Thus combining Lemma \ref{linear measure zero means interpolation} and Theorem \ref{6.1.2. for A_00} shows that $T(K)$ is a null set for $A_{\vec{0}}^\perp$. The theorem now follows from Lemma \ref{Transform support}.  \qedhere

\end{proof}

This generalizes Remark $4.3.$ in \cite{Luger} in which it is noted that all points must have zero mass for an RP-measures.

For $n=2$ the above Theorem is sharp in the sense that there are RP-measures supported on curves, i.e. on sets of Hausdorff dimension $1$. One might ask if it is in fact the case that the dimension of the support of an RP-measure is at least $n-1$.

\section{Results related to interpolation for $A_{\vec{0}}$}

As a consequence of Lemma \ref{convex support annihilate A_00} we see that no positive measure that annihilates $A_{\vec{0}}$ can have support contained in $\{ z \in \bT^2: f(z) = 1 \}$ for any $f \in A_{\vec{0}}$. The present section is concerned with exploring to what extent the converse holds.

This question is related to Bishop's theorem from \cite{Bishop} (Theorem $6.1.3.$ in \cite{Rudin}). From this theorem we can conclude that if $S$ not only supports no positive measure in $A_{\vec{0}}^\perp$, but even has the stronger property that $|\mu|(S) = 0$ for every \emph{complex} measure in $A_{\vec{0}}^\perp$, then $S$ is an interpolation set for $A_{\vec{0}}$ (in fact $S$ even has the stronger property that it is a \emph{peak-interpolation set}, see Definition $6.1.1.$ of \cite{Rudin} for details). Clearly in this case there must be some function $f \in A_{\vec{0}}$ such that $f|_S = 1$, and thus $S$ is contained in the level set of some function in $A_{\vec{0}}$. 

By using an approach inspired by Helson and Lowdenslager (see \cite{HelsonLowdenslager}) we can show a partial converse to that statement: if a compact set $S \subset \bT^2$ has the property that no positive measure in $A_{\vec{0}}^\perp$ has support contained in $S$, then $1$ lies in the $L^2(d \mu)$ closure of $A_{\vec{0}}$ for \emph{every} positive Borel measure $\mu$ whose support is contained in $S$. But we can go a lot further. It turns out that if no positive measure in $A_{\vec{0}}^\perp$ has support contained in $S$, then $A_{\vec{0}} |_S$ is uniformly dense in $C(S)$.

In order to get simpler notation, we will only study the case $n=2$ in the current section. It should be noted however, that apart from the last theorem -- which requires equivalence in Lemma \ref{Transform support} -- none of the results require that $n=2$. 

The following lemma is inspired by, and resembles the presentation of Helson and Lowdenslager's ideas given in Chapter $4$ of \cite{Hoffman}.

\begin{lemma} \label{A_00 closure contains A}
Let $S \subset \bT^2$ be a compact subset with the property that no non-zero positive Borel measure in $A_{\vec{0}}^\perp$ has support contained in $S$. Then the closed subspace of $L^2(d \mu)$ spanned by $A_{\vec{0}}$ equals the closed subspace spanned by $A$ for every positive Borel measure $\mu$ with support contained in $S$.
\end{lemma}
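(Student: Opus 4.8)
The plan is to show the two closed subspaces coincide by proving that $A$ lies in the $L^2(d\mu)$-closure of $A_{\vec{0}}$; the reverse inclusion is trivial since $A_{\vec{0}} \subset A$. Because $A$ is spanned (uniformly, hence in $L^2(d\mu)$) by the monomials $z_1^{k_1} z_2^{k_2}$ with $k_1,k_2 \geq 0$, and since $z_1 z_2 \cdot A = A_{\vec{0}}$ already contains every such monomial with $k_1,k_2\geq 1$, it suffices to show that the constant function $1$ and the ``boundary'' monomials $z_1^{k}$ and $z_2^{k}$ (for $k \geq 0$) lie in the $L^2(d\mu)$-closure of $A_{\vec{0}}$. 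In fact, multiplying by the unimodular functions $z_1^{k}$ or $z_2^{k}$ (which map $A_{\vec{0}}$ into itself and are isometries of $L^2(d\mu)$), one reduces everything to the single claim that $1$ lies in $\overline{A_{\vec{0}}}^{\,L^2(d\mu)}$.

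So the crux is: \emph{if no non-zero positive measure in $A_{\vec{0}}^\perp$ is supported in $S$, then $1 \in \overline{A_{\vec{0}}}^{\,L^2(d\mu)}$ for every positive Borel $\mu$ supported in $S$.} I would argue by contradiction via Hilbert space projection, in the style of Helson--Lowdenslager. Suppose $1 \notin \overline{A_{\vec{0}}}^{\,L^2(d\mu)} =: N$. Let $g$ be the orthogonal projection of $1$ onto $N$ and set $h := 1 - g$, so $h \neq 0$, $h \perp N$, and $\|h\|^2 = \langle h, 1-g\rangle = \langle h,1\rangle = \int_{\bT^2} \overline{h}\, d\mu$ (using $h \perp g \in N$). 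Since $z_1 z_2 f \in A_{\vec{0}}$ for every $f\in A$, and more generally $z_1^{k_1}z_2^{k_2}\cdot 1 \in N$ for $k_1,k_2\geq 1$, orthogonality gives $\int \overline{h}\, z_1^{k_1} z_2^{k_2}\, d\mu = 0$ for all $k_1,k_2 \geq 1$; combined with $\overline{z_1^{k_1}z_2^{k_2}} = z_1^{-k_1}z_2^{-k_2}$ and taking the measure $\overline{h}\,d\mu$ (and its conjugate), one checks that the complex measure $d\nu := \overline{h}\, d\mu$ annihilates a spanning set large enough that, after an appropriate symmetrization, $\nu \in A_{\vec{0}}^\perp$. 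The key algebraic step is that $N$ being invariant under multiplication by $z_1 z_2$ (indeed $z_1 z_2 \cdot \overline{A_{\vec{0}}} \subseteq \overline{A_{\vec{0}}}$) forces $\int \overline{h}\cdot z_1^{k_1} z_2^{k_2}\,d\mu = 0$ for \emph{all} $(k_1,k_2) \neq (0,0)$ with $k_1,k_2\geq 0$ or by conjugation with $k_1,k_2 \leq 0$, while the value at $(0,0)$ equals $\|h\|^2 > 0$; this says exactly that $\overline{h}\,d\mu$ is, up to the nonzero constant $\|h\|^2$, a measure whose Poisson integral has constant positive real part, i.e. a positive RP-measure after the transform $T$, and pulling back through Lemma~\ref{Transform support} we would obtain a nonzero positive measure in $A_{\vec{0}}^\perp$ supported in $S$ --- contradiction.

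I expect the main obstacle to be making precise the passage ``$\overline{h}\,d\mu$ annihilates $A_{\vec{0}}$ and is positive.'' The positivity is the delicate point: $h$ is merely an $L^2(d\mu)$ function, not obviously with nonnegative real part, so one cannot directly assert $\overline{h}\,d\mu \geq 0$. The standard Helson--Lowdenslager remedy is to observe that the annihilation conditions force $|h|^2 d\mu$ (or $\mathrm{Re}(h)\,d\mu$) to have all mixed Fourier coefficients vanishing, hence to be an RP-measure, and then to use that $\int \overline{h}\,d\mu = \|h\|^2 > 0$ together with the extremal/minimality property of the projection to upgrade to a genuinely positive measure; concretely, one shows $|h|^2 d\mu$ is a positive measure in (the $T$-image of) $A_{\vec{0}}^\perp$, using that $h \perp z_1^{k_1}z_2^{k_2} \overline{A_{\vec{0}}}$ in both $h$ and $\overline h$ slots. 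I would carry this out by: (1) reducing to the claim $1 \in \overline{A_{\vec{0}}}^{L^2(d\mu)}$ as above; (2) setting up the projection $h = 1-g$ and recording $h\perp N$, $\|h\|^2 = \int\overline h\,d\mu$; (3) deriving vanishing of all nonzero mixed Fourier coefficients of the measure $h\,\overline{A_{\vec{0}}}$-type, concluding $|h|^2 d\mu$ pushes forward under $T$ to a nonzero positive measure annihilating $A_{\vec{0}}$; (4) invoking the hypothesis on $S$ (with $\mathrm{supp}(|h|^2 d\mu) \subseteq \mathrm{supp}(\mu) \subseteq S$) to reach a contradiction; and finally (5) concluding $N = \overline{A}^{L^2(d\mu)}$ from the monomial-translation argument. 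The routine verifications in step (3) --- checking which Fourier coefficients vanish and assembling the symmetrization so that the resulting measure genuinely lies in $A_{\vec{0}}^\perp$ --- are where the real care is needed, but no new idea beyond the classical Helson--Lowdenslager projection trick is required.
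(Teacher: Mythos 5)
Your final plan is correct and is essentially the paper's own argument: project onto $N=\overline{A_{\vec{0}}}^{L^2(d\mu)}$, observe that $h\perp fh$ for every $f\in A_{\vec{0}}$ (because $fh=f-fg$ with $f\in A_{\vec{0}}\subset N$ and $fg=\lim f g_n\in N$), conclude that $|h|^2\,d\mu$ is a nonzero positive measure in $A_{\vec{0}}^\perp$ supported in $S$, and contradict the hypothesis; the paper runs the same projection argument directly on each monomial $z_j^n$ rather than reducing to $1$ first, but your reduction via multiplication by the unimodular $z_1^{k}$, $z_2^{k}$ is valid and equivalent.

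Two corrections to the middle of your write-up. First, the claimed ``key algebraic step'' that $\int \overline{h}\,z_1^{k_1}z_2^{k_2}\,d\mu=0$ for \emph{all} $(k_1,k_2)\neq(0,0)$ with $k_1,k_2\geq 0$ is circular: orthogonality of $h$ to $N$ only gives this for $k_1,k_2\geq 1$, and extending it to, say, $(1,0)$ would require $z_1\in N$, which is exactly what you are trying to prove. Consequently the assertion that $\overline{h}\,d\mu$ is (after symmetrization) an RP-measure, and the entire detour through the transform $T$, should be discarded --- $T$ plays no role in this lemma, and $|h|^2\,d\mu$ lands in $A_{\vec{0}}^\perp$ itself, not in its $T$-image, which is precisely what lets you apply the hypothesis on $S$ to $\mathrm{supp}(|h|^2 d\mu)\subseteq S$. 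Since your step (3)--(4) as finally stated does go through once these phrases are removed, the proof is sound.
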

\begin{proof}
Denote by $B$ the closed subspace of $L^2(d \mu)$ spanned by functions in $A_{\vec{0}}$. Clearly $B$ is contained in the closed subspace of $L^2(d\mu)$ spanned by the functions in $A$ since $A_{\vec{0}} \subset A$. We will prove the other inclusion, and thus the theorem, by showing that if any function of the form $z_j^n$ is not contained in $B$, where $ n \geq 0$ and $j=1,2$, then there is a real measure supported on $S$ which annihilates $A_{\vec{0}}$. This contradiction shows that a uniformly dense subspace of $A$ is contained in $B$, and thus the closure of $A$ is also contained in $B$.

Denote by $F$ the orthogonal projection of $z_j^n$ onto $B$. By the defining property of $F$ the expression $z_j^n - F \neq 0$ is orthogonal to $B$, and hence to $A_{\vec{0}}$.

In fact, $z_j^n - F$ is orthogonal to every function of the form $f \cdot (z_j^n - F)$ where $f \in A_{\vec{0}}$. To see this, note that there is a sequence of functions $\{f_n\}_{n=1}^\infty$ such that $f_n \ra F$ in $L^2(d \mu)$, that $f \cdot (z_j^n - f_n) \in A_{\vec{0}}$ for every fixed choice of $f \in A_{\vec{0}}$, and that $f \cdot (z_j^n - f_n) \ra f \cdot (z_j^n - F)$. The statement now follows from continuity of $\mu$ as a linear functional on $C(S)$.

That $z_j^n - F$ is orthogonal to $f \cdot (z_j^n - F)$ for every $f \in A_{\vec{0}}$ means that
$$
\int_{\bT^2} f |z_j^n - F|^2 d\mu = 0
$$
for all $f \in A_{\vec{0}}$. But this means that $|z_j^n - F|^2 d\mu$ is a real measure with support contained in $S$ which annihilates $A_{\vec{0}}$. This contradicts the assumption on $S$. 
\end{proof}

\begin{remark}
\normalfont 
The above lemma is sharp in the sense that if $S$ is a set which supports a measure $\mu$ such that some function $f \in A \setminus {A_{\vec{0}}}$ does not lie in the $L^2(d \mu)$ closure of $A_{\vec{0}}$ then by the same argument as above $|f-F|^2 d \mu$ will annihilate $A_{\vec{0}}$, be supported on $S$, and by assumption it is not the zero measure.

Thus for such sets the above construction can be used to generate non-zero positive measures in $A_{\vec{0}}^\perp$ with support contained in $S$, and thus for $n=2$ it can be used to generate non-zero positive RP-measures with support contained in $T^{-1}(S)$. 

\end{remark}
 
\begin{lemma} \label{A_00 dense L^2(mu)}
Let $S \subset \bT^2$ be a compact subset with the property that no non-zero positive Borel measure in $A_{\vec{0}}^\perp$ has support contained in $S$. Then $A_{\vec{0}}$ is a dense subspace of $L^2(d\mu)$ for every positive measure $\mu$ with support contained in $S$. 
\end{lemma}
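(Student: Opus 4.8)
The plan is to take Lemma~\ref{A_00 closure contains A} as the starting point and then run the Helson--Lowdenslager square trick a second time to pull the conjugate coordinate functions into the closed span. Write $B$ for the closure of $A_{\vec 0}$ in $L^2(d\mu)$; the goal is to show $B = L^2(d\mu)$. By Lemma~\ref{A_00 closure contains A}, $B$ also equals the closure of $A$ in $L^2(d\mu)$, and since the holomorphic polynomials are uniformly dense in $A$ (hence $L^2(d\mu)$-dense in $A$), $B$ is precisely the $L^2(d\mu)$-closure of the holomorphic polynomials; in particular $1 \in B$ and $z_1 B \subseteq B$, $z_2 B \subseteq B$.

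First I would show $\bar z_1, \bar z_2 \in B$. Fix $j \in \{1,2\}$, let $G$ be the orthogonal projection of $\bar z_j$ onto $B$, so that $\bar z_j - G$ is orthogonal to $B$ and in particular to $A_{\vec 0}$. The point is that $f\,(\bar z_j - G) \in B$ for every $f \in A_{\vec 0}$: writing $f = z_1 z_2 g$ with $g \in A$ gives $f\bar z_j \in A \subseteq B$, and if $q_n \in A$ with $q_n \to G$ in $L^2(d\mu)$ then $f q_n \in A_{\vec 0}$ and $fq_n \to fG$, so $fG \in B$. Hence $\langle \bar z_j - G,\, f(\bar z_j - G)\rangle_{L^2(d\mu)} = 0$, i.e. $\int_{\bT^2} \bar f\,|\bar z_j - G|^2 \, d\mu = 0$ for all $f \in A_{\vec 0}$; taking complex conjugates and using that $\mu$ is real shows $|\bar z_j - G|^2 d\mu$ is a positive measure in $A_{\vec 0}^\perp$ with support contained in $S$. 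By the hypothesis on $S$ it is the zero measure, so $\bar z_j = G$ in $L^2(d\mu)$, i.e. $\bar z_j \in B$.

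Next I would check that $B$ is invariant under multiplication by each of $z_1, z_2, \bar z_1, \bar z_2$. For $z_1$ and $z_2$ this is immediate. For $\bar z_1$: given $b \in B$, approximate $b$ in $L^2(d\mu)$ by holomorphic polynomials $p_n = \sum_{k,l\ge 0} c_{kl} z_1^k z_2^l$; then $\bar z_1 p_n$ splits as a holomorphic polynomial (the terms with $k \ge 1$) plus the finite combination $\sum_l c_{0l}\,\bar z_1 z_2^l$, and each $\bar z_1 z_2^l \in B$ since $\bar z_1 \in B$ and $z_2 B \subseteq B$; thus $\bar z_1 p_n \in B$, whence $\bar z_1 b = \lim \bar z_1 p_n \in B$. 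The case of $\bar z_2$ is symmetric. Since $1 \in B$, applying these four multiplications to $1$ yields $z_1^m z_2^n \in B$ for all $m,n \in \bZ$.

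Finally, the Laurent monomials $z_1^m z_2^n$, $m,n\in\bZ$, span a subalgebra of $C(\bT^2)$ containing the constants, separating points, and closed under conjugation, so by Stone--Weierstrass it is uniformly dense in $C(\bT^2)$; restricted to $S$ it is dense in $C(S)$, which in turn is dense in $L^2(d\mu)$ by the Radon-measure fact recalled in Section~1. Hence $B$, a closed subspace containing all such monomials, equals $L^2(d\mu)$, i.e. $A_{\vec 0}$ is dense in $L^2(d\mu)$. The only step that is not pure bookkeeping is the square trick of the second paragraph: there positivity of $\mu$ is essential, since it is what turns the $L^2$-projection error into a \emph{positive} annihilator of $A_{\vec 0}$ supported on $S$, which the hypothesis forbids unless it vanishes.
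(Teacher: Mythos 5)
Your proof is correct. Both arguments share the same skeleton --- start from Lemma~\ref{A_00 closure contains A}, get every Laurent monomial $z_1^m z_2^n$ ($m,n\in\bZ$) into the closed span $B$ of $A_{\vec 0}$, and finish with Stone--Weierstrass plus density of $C(S)$ in $L^2(d\mu)$ --- but the middle step is genuinely different. The paper never invokes the hypothesis on $S$ again after Lemma~\ref{A_00 closure contains A}: it introduces the spaces $S_k$ and inducts downward in $k$, using only that multiplication by $z_1z_2$ is a surjective isometry from $S_k$ onto $S_{k+1}$; in effect it shows that for \emph{any} positive Borel measure $\mu$ on $\bT^2$, the single identity $\overline{A_{\vec 0}}=\overline{A}$ in $L^2(d\mu)$ already forces $A_{\vec 0}$ to be dense. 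You instead rerun the Helson--Lowdenslager projection trick on $\overline{z_1}$ and $\overline{z_2}$, which calls on the positivity hypothesis a second time (to kill the positive annihilator $|\overline{z_j}-G|^2\,d\mu$), and then propagate by multiplicative invariance of $B$. Your route is arguably more transparent --- it makes visible exactly where positivity enters --- while the paper's isolates all use of the hypothesis in the preceding lemma and reduces the rest to formal bookkeeping; the details you supply (that $f\overline{z_j}\in A$ for $f\in A_{\vec 0}$, that $fG\in B$ via approximation from $A$, and the conjugation step turning $\int \overline{f}\,|\overline{z_j}-G|^2\,d\mu=0$ into membership of the measure in $A_{\vec 0}^{\perp}$) are all sound.
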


\begin{proof}
Denote by $B$ the closed subspace of $L^2(d \mu)$ spanned by functions in $A_{\vec{0}}$. We will prove that $B = L^2(d \mu)$ by showing that $B$ contains $z_1^\alpha z_2^\beta$ for all $\alpha,\beta \in \bZ$. This will be done by applying Lemma \eqref{A_00 closure contains A} and induction. 

Now define the spaces
$$
S_{k} := \{f \in C(\bT^2): \hat{f}(n,m) = 0 \text{ if } n < k \text{ or } m < k \}.
$$
Note that $S_{1} = A_{\vec{0}}$. By Lemma \eqref{A_00 closure contains A} we have that $S_{0} \subset B$. Now suppose that $S_{k}$ is contained in the closed subspace spanned by the functions in $S_{k+1}$ for some integer $k \leq 0$. Since multiplication by a monomial is an isometry on $L^2(d \mu)$ for every Borel measure $\mu$ on $\bT^2$, we have, for every choice of $\alpha, \beta \geq k$, that
\begin{multline*}
\inf_{f \in S_{k}} \int_{\bT^2} |z_1^{\alpha -1} z_2^{\beta-1} - f(z) |^2 d \mu = \inf_{f \in S_{k}} \int_{\bT^2} |z_1^{\alpha} z_2^{\beta} - z_1 z_ 2 f(z) |^2 d \mu \\
= \inf_{\tilde{f} \in S_{k+1}} \int_{\bT^2} |z_1^{\alpha} z_2^{\beta} - \tilde{f}(z) |^2 d \mu = 0,
\end{multline*}
where the last equality holds by the induction hypothesis. It follows that $S_{k-1}$ is contained in the closed subspace spanned by functions in $S_{k}$, and thus by induction we have that all functions of the form $z_1^\alpha z_2^\beta$ with $\alpha,\beta \in \bZ$ are contained in $B$. Thus $C(S) \subset B$ by the Stone-Weierstrass theorem. Since $\mu$ is a Radon measure, we have that $C(S)$ is dense in $L^2(d \mu)$, and thus it also follows that $L^2(d \mu) \subset B$, which finishes the proof. \qedhere

\end{proof}

\begin{remark}
\normalfont
If we would have assumed that $S$ has the stronger property that it supports no \emph{complex} measure that annihilates $A_{\vec{0}}$, then the above result would follow a lot easier by using the same idea as in the proof of Lemma \ref{A_00 closure contains A}. In that case on could simply take any $f \in C(S)$, let $F$ denote the $L^2(d \mu)$ projection of $f$ onto $B$, and then note that the measure $\overline{(f - F)} d \mu$ annihilates $A_{\vec{0}}$, contradicting the assumption on $S$ unless the measure is $0$. The difficulty is thus in obtaining a positive measure in $A_{\vec{0}}^\perp$. 

In fact, by the Hahn-Banach theorem we immediately see that $A_{\vec{0}} |_S$ is uniformly dense in $C(S)$ if every complex measure on $S$ that annihilates $A_{\vec{0}}$ also annihilates $C(S)$, since if it is not dense there should be a non-trivial bounded linear functional on $C(S)$ -- that is, a complex measure with support contained in $S$ -- which annihilates $A_{\vec{0}} |_S$. 

This idea will be used to prove the main theorem of this section.

\end{remark}

\begin{thm} \label{A_00 uniformly dense}
Let $S \subset \bT^2$ be a compact subset with the property that no non-zero positive Borel measure that annihilates $A_{\vec{0}}$ has support contained in $S$. Then the restrictions of functions in $A_{\vec{0}}$ to $S$ is a uniformly dense subspace of $C(S)$.
\end{thm}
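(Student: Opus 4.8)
The plan is to use Hahn--Banach together with the Riesz representation theorem to reduce the statement to a claim about \emph{complex} annihilating measures, and then to settle that claim using Lemma \ref{A_00 dense L^2(mu)} applied to a total variation measure.

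First I would recall that, by the Riesz representation theorem, the dual of $C(S)$ is the space of complex Borel measures with support contained in $S$, and that a subspace of $C(S)$ is uniformly dense precisely when the only such measure annihilating it is the zero measure. Hence, if $A_{\vec{0}}|_S$ were \emph{not} uniformly dense in $C(S)$, the Hahn--Banach theorem would produce a non-zero complex measure $\nu$ with $\text{supp}(\nu)\subset S$ and $\int_{\bT^2} f\,d\nu = 0$ for every $f\in A_{\vec{0}}$; that is, a non-zero $\nu\in A_{\vec{0}}^\perp$ supported on $S$. So it suffices to show that every such $\nu$ vanishes.

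Next I would take the polar decomposition $d\nu = h\,d|\nu|$, where $|\nu|$ is the (finite, positive) total variation measure and $|h| = 1$ $\,|\nu|$-almost everywhere. Since $\text{supp}(|\nu|) = \text{supp}(\nu)\subset S$ and, by hypothesis, $S$ supports no non-zero positive measure in $A_{\vec{0}}^\perp$, Lemma \ref{A_00 dense L^2(mu)} applies with $\mu = |\nu|$ and yields that $A_{\vec{0}}$ is dense in $L^2(d|\nu|)$. As $|\nu|$ is finite and $|\bar h| = 1$, we have $\bar h\in L^2(d|\nu|)$, so there is a sequence $\{f_n\}\subset A_{\vec{0}}$ with $f_n\to\bar h$ in $L^2(d|\nu|)$. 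Since $h\in L^2(d|\nu|)$ as well, the Cauchy--Schwarz inequality gives $\|f_n h - \bar h h\|_{L^1(d|\nu|)}\le \|f_n-\bar h\|_{L^2(d|\nu|)}\,\|h\|_{L^2(d|\nu|)}\to 0$, hence
\[
0 = \int_{\bT^2} f_n\,d\nu = \int_{\bT^2} f_n h\,d|\nu| \;\longrightarrow\; \int_{\bT^2} |h|^2\,d|\nu| = |\nu|(\bT^2).
\]
Thus $|\nu|(\bT^2) = 0$, i.e. $\nu = 0$, contradicting the choice of $\nu$; this proves the theorem.

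I do not expect a genuine obstacle once Lemma \ref{A_00 dense L^2(mu)} is in hand — the whole force of the argument is concentrated there (and, behind it, in Lemma \ref{A_00 closure contains A}, which produces the positive annihilating measure $|z_j^n - F|^2\,d\mu$). The only points that require a little care are the identification of the annihilator of $A_{\vec{0}}|_S$ with measures in $A_{\vec{0}}^\perp$ supported on $S$ (Riesz plus Hahn--Banach), and the justification of the limit exchange, where finiteness of $|\nu|$ is used to guarantee that the polar factor $\bar h$ is square-integrable so that Cauchy--Schwarz applies.
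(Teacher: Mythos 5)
Your proposal is correct and follows essentially the same route as the paper's own proof: Hahn--Banach reduces the claim to showing that no non-zero complex measure in $A_{\vec{0}}^\perp$ is supported on $S$, and this is settled by applying the polar decomposition $d\nu = h\,d|\nu|$ together with Lemma \ref{A_00 dense L^2(mu)} for the positive measure $|\nu|$. Your explicit Cauchy--Schwarz justification of the limit exchange is a detail the paper leaves implicit, but the argument is the same.
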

\begin{proof}
We begin by proving that the only \emph{complex} measure in $A_{\vec{0}}^\perp$ with support contained in $S$ is the zero measure. 

Let $\mu$ be an arbitrary complex measure in $A_{\vec{0}}^\perp$. Then by the Radon-Nikodym theorem $d \mu(z) = U(z) d |\mu|$ for some measurable function $U(z)$ with $|U| = 1$ everywhere. Thus
$$
0 = \int_{S} f(z) d \mu(z) = \int_{S} f(z) U(z) d |\mu|(z) 
$$
for all $f(z) \in A_{\vec{0}}$, so $\overline{U(z)}$ is orthogonal to $A_{\vec{0}}$ 

Since the total variation measure $|\mu|$ is a positive measure with support contained in $S$, we know by Lemma \ref{A_00 dense L^2(mu)} that $A_{\vec{0}}$ is dense in $L^2(d |\mu|)$. Thus we can find a sequence of functions $f_n \in A_{\vec{0}}$ such that $f_n \ra \overline{U}$ in $L^2(d |\mu|)$. It follows that
$$
0 = \lim_{n \ra \infty} \int_S f_n U d |\mu|(z) = \int_S |U|^2 d |\mu|(z) = |\mu|(S),
$$
so $|\mu|$ is the zero measure and thus $\mu$ is also the zero measure. Since $\mu$ was arbitrary it follows that the zero measure is the only complex measure with support contained in $S$ that annihilates $A_{\vec{0}}$. 

This implies that $A_{\vec{0}}|_S$ is a dense subspace of $C(S)$ with respect to the supremum norm. To see this, note that if this was not the case, then by the Hahn-Banach theorem, there should be a non-trivial element in $C(S)^*$ that annihilates all functions in $A_{\vec{0}}|_S$. That is, a non-trivial complex Borel measure $\mu$ on $S$ such that
$$
\int_S f(z) d \mu(z) = 0
$$
for all $f \in A_{\vec{0}}$. But we have proved that such a measure cannot exist, and thus $A_{\vec{0}}|_S$ is a dense subspace of $C(S)$.

\end{proof}

Finally, by combining Theorem \ref{A_00 uniformly dense} with Lemma \ref{Transform support} we get the following result for compact sets that support no positive RP-measures.
\begin{thm}
Let $S \subset \bT^2$ be a compact subset with the property that no positive RP-measure has support contained in $S$. Then $A_{\vec{0}} |_{T(S)}$ is a uniformly dense subspace of $C(T(S))$.
\end{thm}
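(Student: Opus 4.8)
The plan is to deduce the statement directly from Theorem~\ref{A_00 uniformly dense} by checking that the compact set $T(S)$ satisfies that theorem's hypothesis, namely that no non-zero positive Borel measure in $A_{\vec{0}}^\perp$ has support contained in $T(S)$.

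First I would record that $T(S)$ is compact. The map $T\colon(z_1,z_2)\mapsto(z_1,\overline{z_2})$ is continuous on $\bT^2$ and satisfies $T^2=\mathrm{Id}$, so it is a homeomorphism of $\bT^2$ onto itself; in particular it carries the compact set $S$ to the compact set $T(S)$, and $T^{-1}(T(S))=S$.

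Next I would invoke the $n=2$ case of Lemma~\ref{Transform support}, where the relevant implication is an equivalence: for a compact set $S'\subset\bT^2$, no non-zero positive measure in $A_{\vec{0}}^\perp$ has support contained in $S'$ if and only if no non-zero positive RP-measure has support contained in $T^{-1}(S')$. Applying this with $S'=T(S)$ and using $T^{-1}(T(S))=S$ from the previous paragraph, the hypothesis of the present theorem --- that no (non-zero) positive RP-measure is supported in $S$ --- is precisely equivalent to the statement that no non-zero positive Borel measure in $A_{\vec{0}}^\perp$ is supported in $T(S)$. That is exactly the hypothesis Theorem~\ref{A_00 uniformly dense} requires of $T(S)$, so applying that theorem to the compact set $T(S)$ gives that $A_{\vec{0}}|_{T(S)}$ is a uniformly dense subspace of $C(T(S))$, as claimed.

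There is no genuinely hard step here: all of the substantive work has already been done in Lemma~\ref{Transform support} (in particular the reversibility of the implication for $n=2$, which rests on the Fourier-coefficient characterisation of RP-measures) and in Theorem~\ref{A_00 uniformly dense}. The only point that calls for a little care --- and the place where a careless argument could go wrong --- is making sure one feeds $T(S)$ (rather than $S$) into the equivalence of Lemma~\ref{Transform support} and then uses the involutivity $T^2=\mathrm{Id}$ so that $T^{-1}(T(S))$ collapses back to $S$; keeping the direction of the equivalence straight is the main (and essentially only) obstacle.
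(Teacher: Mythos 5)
Your proposal is correct and is exactly the argument the paper intends (the paper states this theorem without a written proof, merely noting that it follows by combining Theorem~\ref{A_00 uniformly dense} with Lemma~\ref{Transform support}). You have simply made explicit the one point worth checking — that the $n=2$ equivalence in Lemma~\ref{Transform support} must be applied to $T(S)$, with $T^{-1}(T(S))=S$ by involutivity — and done so correctly.
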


We end this paper with some further observations and questions.

Theorem \ref{A_00 uniformly dense} shows that if $S \subset \bT^2$ is a compact subset with the property that no non-zero positive Borel measure that annihilates $A_{\vec{0}}$ has support contained in $S$, then $A_{\vec{0}}|_S$ is uniformly dense in $S$. One might ask if it is possible to go even further and conclude that such a set $S \subset \bT^2$ is an interpolation set for $A_{\vec{0}}$. In general this will not be possible, as will be explained below.

As mentioned earlier, Bishop's theorem (a more general version of one part of Theorem \ref{6.1.2. for A_00}) shows that $S$ is indeed an interpolation set for $A_{\vec{0}}$ if $S$ has the stronger property that $|\mu|(S) = 0$ for every \emph{complex} measure in $A_{\vec{0}}^\perp$, and as Theorem \ref{6.1.2. for A_00} shows, this implication is in fact an equivalence. Thus if $S$ is a compact subset with the property that no non-zero positive measure in $A_{\vec{0}}^\perp$ has support contained in $S$; then asking under what extra assumptions on $S$ we can conclude that $S$ is an interpolation set for $A_{\vec{0}}$ is equivalent to asking under what extra assumptions such a set is a null set for \emph{all} annihilators of $A_{\vec{0}}$. 

Consider the situation for functions of one variable. By the theorem of F. and M. Riesz the only complex measures that annihilate the space $A_0$ -- which consists of all functions in $A(\bD)$ that vanish at the origin -- are the measures of the form $\overline{f(z)} |dz|$ where $|dz|$ denotes Lebesgue measure on $\bT$ and $f$ is an element of the Hardy space $H^1$. This means that the \emph{only} compact subset of $\bT$ that contains the support of any annihilator -- either complex, real or positive -- is all of $\bT$. Note that a generalization of the observation that we get the same possible support sets for both positive and complex measures appears in the proof of Theorem \ref{A_00 uniformly dense}.

So if a compact subset $K$ does not support any real annihilator of $A_0$ then this only means that $ K \neq \bT$, and thus this alone does not imply that it is a null set for all complex annihilators. For example no annihilator of $A_0$ has support contained in a half-circle, but the half circle is not a null set for all complex annihilators. This shows that Theorem \ref{A_00 uniformly dense} cannot (without further assumptions) be generalized to conclude that such a set $K$ is an interpolation set.

However, if we add the assumption that $K$ does not support any real annihilator of $A_0$ and has Hausdorff dimension less than the dimension of $\bT$ (i.e. we assume it has measure $0$), then in fact this does imply that it is a null set for all complex annihilators (again by the F. and M. Riesz theorem). One might ask if something similar holds on $\bT^2$. For example if certain restrictions on the dimension and location of $K$ combined with the assumptions from Theorem \ref{A_00 uniformly dense} are enough to conclude that $K$ is an interpolation set.

\section*{Acknowledgements}
The author thanks Bartosz Malman for several valuable discussions and ideas, especially regarding the last section, and Lemma \ref{A_00 dense L^2(mu)} and Theorem \ref{A_00 uniformly dense} in particular.


\begin{thebibliography}{99}

\bibitem{Clark2}
	J.T. Anderson, L. Bergqvist, K. Bickel, J.A. Cima, and A. A. Sola
	\emph{: Clark measures for rational inner functions II: general bidegrees and higher dimensions}, preprint available at
	https://arxiv.org/abs/2303.11248

\bibitem{Clark1}
	K. Bickel, J.A. Cima, and A. A. Sola
	\emph{: Clark measures for rational inner functions},
	Michigan Math. J., to appear. https://doi.org/10.1307/mmj/20216046.


\bibitem{Bishop}
	Erret Bishop
	\emph{: A minimal boundary for function algebras},
	Pac. J. Math. 9, (1959), 629--642.
	

\bibitem{Doubtsov}
	Evgueni Doubtsov
	\emph{: Clark measures on the torus}, 
	Proc. Amer. Math. Soc. 148, (2020), no. 5, 2009--2017.

\bibitem{Folland}
	Gerald B. Folland
	\emph{: Real Analysis},
	2nd edn. Wiley,
	New York,
	(1999). 

	
\bibitem{HelsonLowdenslager}
	H. Helson and D. Lowdenslager
	\emph{: Prediction theory and Fourier series in several variables},
	Acta Math. vol. 99, (1958), 165--202.

\bibitem{Hoffman}
	Kenneth Hoffman
	: \emph{Banach spaces of analytic functions},
	Prentice-Hall Series in Modern Analysis, Prentice-Hall,
	New York,
	(1962).
	
\bibitem{Luger}
	A. Luger and M. Nedic
	\emph{: Geometric properties of measures related to holomorphic functions having positive imaginary or real part},
	J. Geom. Anal. 31 (2021) 2611--2638.
	
\bibitem{McDonald}
	John N. McDonald
	\emph{: An extreme absolutely continuous RP -measure},
	Proc. Amer. Math. Soc. 109 (1990), 731--738.

\bibitem{Rudin}
	Walter Rudin
	: \emph{Function Theory in polydisks},
	W. A. Benjamin, Inc.,
	New York-Amsterdam,
	(1969). 

	

	


\end{thebibliography}
\end{document}